\documentclass[10pt]{amsart}
\usepackage{amssymb}
\usepackage{bm}
\usepackage{graphicx}
\usepackage[centertags]{amsmath}
\usepackage{amsfonts}
\usepackage{amsthm}
\usepackage{graphicx}
\linespread{1.18}


\newtheorem{thm}{Theorem}
\newtheorem{cor}[thm]{Corollary}
\newtheorem{lem}[thm]{Lemma}

\newtheorem{prop}[thm]{Proposition}

\newtheorem{defn}[thm]{Definition}
\theoremstyle{definition}


\newcommand{\nn}{\mathbb{N}}
\newcommand{\ee}{\varepsilon}
\newcommand{\con}{\smallfrown}
\newcommand{\meg}{\geqslant}
\newcommand{\mik}{\leqslant}
\newcommand{\lines}{\mathrm{Lines}}
\newcommand{\dens}{\mathrm{dens}}
\newcommand{\gr}{\mathrm{GR}}
\newcommand{\ave}{\mathbb{E}}
\newcommand{\dhj}{\mathrm{DHJ}}
\newcommand{\mdhj}{\mathrm{MDHJ}}

\begin{document}

\title{A simple proof of the density Hales--Jewett theorem}

\author{Pandelis Dodos, Vassilis Kanellopoulos and Konstantinos Tyros}

\address{Department of Mathematics, University of Athens, Panepistimiopolis 157 84, Athens, Greece}
\email{pdodos@math.uoa.gr}

\address{National Technical University of Athens, Faculty of Applied Sciences,
Department of Mathematics, Zografou Campus, 157 80, Athens, Greece}
\email{bkanel@math.ntua.gr}

\address{Department of Mathematics, University of Toronto, Toronto, Canada M5S 2E4}
\email{ktyros@math.toronto.edu}

\thanks{2000 \textit{Mathematics Subject Classification}: 05D10.}
\thanks{\textit{Key words}: words, combinatorial lines, density.}

\maketitle


\begin{abstract}
We give a purely combinatorial proof of the density Hales--Jewett Theorem that is modeled after Polymath's proof but is significantly
simpler. In particular, we avoid the use of the equal-slices measure and work exclusively with the uniform measure.
\end{abstract}


\section{Introduction}

We begin by introducing some pieces of notation and some terminology. For every pair $k,n$ of positive integers let $[k]^n$ be the set
of all sequences of length $n$ having values in $[k]:=\{1,...,k\}$. The elements of $[k]^n$ will be referred to as \textit{words}.
Also fix a letter $v$. A \textit{variable word} is a finite sequence of length $n$ having values in $[k]\cup\{v\}$ where the letter $v$
appears at least once. If $\ell$ is a variable word and $i\in[k]$, then $\ell(i)$ is the word obtained by substituting all appearances
of the letter $v$ in $\ell$ by $i$. A \textit{combinatorial line} of $[k]^n$ is a set of the form $\{\ell(i):i\in[k]\}$ where $\ell$ is
a variable word. If $A$ is a subset of $[k]^n$, then its \textit{density} is the quantity $|A|/k^n$ where $|A|$ stands for the cardinality
of the set $A$.

The following result is known as the \textit{density Hales--Jewett Theorem} and is due to H. Furstenberg and Y. Katznelson \cite{FK2}.
\begin{thm} \label{t1}
For every integer $k\meg 2$ and every $0<\delta\mik 1$ there exists an integer $N$ with the following property. If $n\meg N$ and
$A$ is a subset of $[k]^n$ of density $\delta$, then $A$ contains a combinatorial line of $[k]^n$. The least integer $N$ with this
property will be denoted by $\dhj(k,\delta)$.
\end{thm}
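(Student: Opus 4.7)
The plan is to argue by induction on $k$. For the base case $k=2$, words in $[2]^n$ correspond to subsets of $[n]$ via the set of coordinates equal to $2$, and a combinatorial line is a pair of such subsets that are comparable (the variable word's $v$-positions form the symmetric difference). Thus $\dhj(2,\delta)$ reduces to Sperner's theorem: any $\delta$-dense subfamily of $2^{[n]}$, for $n$ large depending on $\delta$, contains a comparable pair.

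For the inductive step from $k-1$ to $k$, I would first bootstrap $\dhj(k-1,\delta)$ to a multidimensional version $\mdhj(k-1,\delta,d)$, stating that any $\delta$-dense subset of $[k-1]^n$ (with $n$ large) contains a $d$-dimensional combinatorial subspace. This is standard: one iterates $\dhj(k-1)$ inside the set of variable words that extend previously found lines, using a product/pigeonhole passage from dimension $d$ to dimension $d+1$.

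The main engine is a density-increment strategy. Given $A\subset[k]^n$ of density $\delta$ with no combinatorial line, I would seek a combinatorial subspace of $[k]^n$ on which $A$ has density at least $\delta+c$ for some $c=c(\delta,k)>0$; iterating at most $\lceil 1/c\rceil$ times forces density above $1$, the desired contradiction. To extract the increment, decompose $[k]^n$ according to the coordinate pattern of the letter $k$: each $x\in[k]^n$ is encoded as $(W,y)$ with $W=\{i:x_i=k\}$ and $y\in[k-1]^{[n]\setminus W}$. A combinatorial line in $[k]^n$ couples $A$ across $k$ such level sets along a wildcard set $V$. Absence of lines should therefore force the indicator of $A$ to correlate with an ``$ij$-insensitive'' set (one invariant under swapping the values $i$ and $j$ in a fixed coordinate subset). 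Feeding $\mdhj(k-1)$ into the structured component yields a $(k-1)$-dimensional combinatorial subspace of $[k-1]^m$ on which the behavior of $A$ is controlled, and averaging promotes this to a combinatorial subspace of $[k]^n$ where $A$'s uniform density has genuinely increased.

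The principal obstacle, and precisely what the paper aims to streamline relative to Polymath, is the mismatch between the uniform measure on $[k]^n$ and its restriction to individual level sets: the slice $\{x:|\{i:x_i=k\}|=m\}$ carries mass $\binom{n}{m}(k-1)^{n-m}/k^n$, which concentrates sharply around $m\approx n/k$. The original Polymath argument introduces the equal-slices measure to equalize these weights and then must repeatedly translate between measures. I expect the hard part here to be performing the whole correlation-plus-increment step directly under the uniform measure, by a careful averaging argument that embraces rather than averages away the binomial weights, so that the $(k-1)$-dimensional subspace produced by $\mdhj(k-1)$ can be promoted to a $1$-dimensional subspace of $[k]^n$ without an auxiliary measure.
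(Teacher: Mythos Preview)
Your high-level architecture matches the paper exactly: induction on $k$, Sperner for $k=2$, bootstrap to a multidimensional version, and a density-increment dichotomy in which failure to contain a line forces correlation with an intersection of $(i,k)$-insensitive sets, after which one tiles that structured set by subspaces. So far, so good.

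The gap is precisely at the step you yourself flag as ``the principal obstacle.'' Your proposed route to the correlation --- decompose $x\in[k]^n$ as $(W,y)$ with $W=\{i:x_i=k\}$ and then do a ``careful averaging argument that embraces the binomial weights'' --- is the level-set decomposition that \emph{leads to} the equal-slices measure, not away from it. You have correctly identified the difficulty but have not supplied a mechanism to overcome it; ``careful averaging'' is a hope, not an argument. The paper's point is that no amount of tweaking the slice decomposition suffices, and it does something entirely different.

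Here is what actually replaces that step. Fix a short prefix length $l$ and, for each $u\in[k]^l$, consider the ``fiber event'' $A_u=\{y\in[k]^{n-l}:u^\smallfrown y\in A\}$ in the probability space $[k]^{n-l}$ with uniform measure. After a preliminary regularization (Lemma~\ref{l4}) one may assume $\mathrm{dens}(A_u)\geqslant \delta-\varepsilon$ for all $u$ in some large subspace $V$. Now two-color the lines $\ell$ of $V\upharpoonright(k-1)$ according to whether $\mathrm{dens}\bigl(\bigcap_{u\in\ell}A_u\bigr)\geqslant\theta$ for a suitable $\theta=\theta(k,\delta)$, and apply the Graham--Rothschild theorem (Proposition~\ref{p2}) to pass to a subspace $Y$ on which all lines have the same color. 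The ``all bad'' color is ruled out by a fiber-wise application of $\dhj_{k-1}$: for a positive proportion of $y$, the set $\{u\in Y:u^\smallfrown y\in A\}$ is $\delta/4$-dense in the $(k-1)$-alphabet space $Y$, so contains a line, and pigeonholing on which line it is produces an $\ell$ with $\mathrm{dens}\bigl(\bigcap_{u\in\ell}A_u\bigr)\geqslant\theta$. This is the Erd\H{o}s--Hajnal ``correlated events'' idea advertised in the introduction, and it is what lets the entire argument run under the uniform measure; level sets never appear. From the resulting abundance of lines of $W\upharpoonright(k-1)$ inside $A$ one manufactures the insensitive sets $C_i$ directly (Lemma~\ref{l10}).

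A second, smaller gap: your sketch of the tiling step (``feeding $\mdhj(k-1)$ into the structured component'') is too vague. One needs not just subspaces of $[k-1]^m$ but $m$-dimensional subspaces $V$ of $[k]^n$ with $V\upharpoonright(k-1)\subseteq A$; this is Corollary~\ref{c5}, and the actual partition of $D_1\cap\cdots\cap D_{k-1}$ into subspaces (Lemma~\ref{l12} and Corollary~\ref{c13}) is a greedy extraction that crucially uses insensitivity to promote $V\upharpoonright(k-1)\subseteq D$ to $V\subseteq D$.
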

The density Hales--Jewett Theorem is a fundamental result of Ramsey Theory. It has several strong results as consequences, most notably
the famous Szemer\'{e}di Theorem on arithmetic progressions \cite{Sz} and its multidimensional version \cite{FK1}.

Because of its significance the density Hales--Jewett Theorem has received considerable attention and there are, by now, several different
proofs \cite{Au,Pol,Tao}.  Our goal in this paper is to give yet another proof of the density Hales--Jewett Theorem that is modeled after
Polymath's proof \cite{Pol} but places one of its crucial parts in a general conceptual framework. In fact, the argument was found in the
course of obtaining a density version of the Carlson--Simpson Theorem \cite{DKT} and we decided to present it also within the context of
the density Hales--Jewett Theorem since it simplifies the method in \cite{Pol}.

To proceed with our discussion it is useful at this point to recall the strategy of Polymath's proof. It is based on the density
increment method. Specifically, one argues that if a subset $A$ of $[k]^n$ of density $\delta$ fails to contain a combinatorial line,
then $A$ has density $\delta+\gamma$ inside a large subspace of $[k]^n$ where $\gamma$ is a positive constant that depends only on $\delta$;
once this is done Theorem \ref{t1} follows by a standard iteration. To achieve this goal, one proceeds in two steps: firstly one shows that $A$
must correlate with a ``structured" set $B$ more than expected, and then argues that the ``structured" set $B$ can be partitioned in subspaces.

The proof of the second step given in \cite{Pol} is a non-trivial modification of an argument due to M. Ajtai and E. Szemer\'{e}di \cite{AS}.
It is essentially a ``greedy" algorithm with an elegant proof that appears to be optimal, and we offer no new insight.

To execute the first step it is necessary to have a ``probabilistic" version of Theorem \ref{t1}. This means that a dense subset of $[k]^n$
not only will contain a combinatorial line but, actually, a non-trivial portion of them. Unfortunately, such a naive ``probabilistic" version
is false. To overcome this problem the participants of the polymath project introduced the \textit{equal-slices} measure, a probability
measure on $[k]^n$, and argued that for the equal-slices measure Theorem \ref{t1} does have a density version.  While the idea of changing
the measure is an important one, it necessitates a number of tools whose relevance to Theorem \ref{t1} can be justified only a posteriori.

We propose a different way to obtain such a ``probabilistic" version that enables us to work exclusively with the uniform measure on $[k]^n$.
Our approach is based on an old paper of P. Erd\H{o}s and A. Hajnal \cite{EH} that initiated the study of the following general problem in
Ramsey Theory. Suppose that we are given a Ramsey space $\mathbb{S}$; for concreteness the reader may think of $[k]^n$ for some large $n$.
Suppose, further, that we are given a family $\{A_s:s\in\mathbb{S}\}$ of measurable events in a probability space $(\Omega,\Sigma,\mu)$
satisfying $\mu(A_s)\meg \delta>0$ for every $s\in\mathbb{S}$. The goal is then to find a ``substructure" $\mathbb{S}'$ of $\mathbb{S}$
(in the case of the density Hales--Jewett Theorem, $\mathbb{S}'$ is a combinatorial line of $[k]^n$) such that the events in the family $\{A_s:s\in\mathbb{S}'\}$ are highly correlated. Many density results in Ramsey Theory can be formulated in this way and so does the
density Hales--Jewett Theorem; see, \cite[Proposition 2.1]{FK2}. It is precisely this form that we are taking advantage of, together
with some simple coloristic and averaging arguments, and execute the first step.

Some final remarks about how this paper is written. We have made no attempt to optimize the argument. Instead, we tried to make the exposition
as clear as possible. The bounds we get have an Ackermann-type dependence with respect to $k$ and coincide, essentially, with the bounds from
Polymath's proof for all sufficiently large values of $k$. The fundamental problem whether there exist primitive recursive bounds for the
numbers $\dhj(k,\delta)$ is open and is likely to require a more sophisticated approach.


\section{Background material}

By $\nn=\{0,1,2,...\}$ we denote the natural numbers. As we have already mentioned, the cardinality of a set $X$ will be denoted by $|X|$.
For every nonempty finite set $X$ by $\ave_{x\in X}$ we shall denote the average $\frac{1}{|X|} \sum_{x\in X}$. If it is clear which set
$X$ we are referring to, then this average will be denoted simply by $\ave_x$.

We recall some definitions related to the Hales--Jewett Theorem \cite{HJ}. Specifically, let $k,m,n\in\nn$ with $k\meg 2$ and $n\meg m\meg1$
and fix an $m$-tuple $v_1,...,v_m$ of distinct letters. An \textit{$m$-variable word} of $[k]^n$ is a finite sequence of length $n$ having
values in $[k]\cup\{v_1,...,v_m\}$ where, for each $j\in [m]$, the letter $v_j$ appears at least once. If $z$ is an $m$-variable word and
$a_1,...,a_m\in[k]$, then $z(a_1,...,a_m)$ is the word obtained by substituting in $z$ the letter $v_j$ with $a_j$ for every $j\in [m]$.
An \textit{$m$-dimensional subspace} of $[k]^n$ is a set of the form $\{z(a_1,...,a_m):a_1,...,a_m\in[k]\}$ where $z$ is an $m$-variable word.
Observe that an $1$-dimensional subspace of $[k]^n$ is just a combinatorial line. If $V$ is an $m$-dimensional subspace of $[k]^n$, then by
$\lines(V)$ we shall denote the set of all combinatorial lines of $[k]^n$ that are contained in $V$. Moreover, for every subset $A$ of $[k]^n$
the \textit{density of $A$ in $V$}, denoted by $\dens_V(A)$, is the quantity $|A\cap V|/|V|$. The density of $A$ in $[k]^n$ will be denoted
simply by $\dens(A)$.

Let $V$ be an $m$-dimensional subspace of $[k]^n$ and $z$ be the $m$-variable word that generates it. Notice that $z$ induces a natural
``isomorphism" between $[k]^m$ and $V$ defined by $[k]^m\ni (a_1,...,a_m)\mapsto z(a_1,...,a_m)\in V$. Thus, in practice, we may identify
$m$-dimensional subspaces of $[k]^n$ with ``copies" of $[k]^m$ inside $[k]^n$. Having this identification in mind, for every $k'\in\nn$
with $2\mik k'\mik k$ we set
\[V\upharpoonright k'=\{ z(a_1,...,a_m): a_1, ..., a_m\in [k']\}.\]

Now let $n, l\in\nn$ with $n, l\meg 1$. For every $x\in [k]^n$ and every $y\in [k]^l$ by $x^{\con}y$ we shall denote the concatenation
of $x$ and $y$. Notice that $x^{\con} y\in [k]^{n+l}$. More generally, if $A\subseteq [k]^n$ and $B\subseteq [k]^l$ then we set
$A^{\con} B=\{x^{\con} y: x\in A \text{ and } y\in B\}$.

Finally we record, for future use, the following consequence of the Graham--Rothschild Theorem \cite{GR}.
\begin{prop} \label{p2}
For every integer $k\meg 2$ and every integer $m\meg 1$ there exists an integer $N$ with the following property. For every integer $n\meg N$
and every set $\mathcal{L}$ of combinatorial lines of $[k]^n$ there exists an $m$-dimensional subspace $V$ of $[k]^n$ such that either $\lines(V)\subseteq \mathcal{L}$ or $\lines(V)\cap\mathcal{L}=\varnothing$. The least integer $N$ with this property will be denoted by $\gr(k,m)$.
\end{prop}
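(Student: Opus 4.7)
The plan is to deduce the proposition as an immediate consequence of the Graham--Rothschild theorem applied to a $2$-coloring of combinatorial lines.

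First I would set up the dictionary between combinatorial lines and $1$-variable words. Since $k\meg 2$, combinatorial lines of $[k]^n$ are in canonical bijection with $1$-variable words of length $n$ over $[k]$: given a line $L=\{\ell(1),\dots,\ell(k)\}$, the coordinates at which the elements of $L$ agree are the fixed positions of $\ell$ (with the common value), while the coordinates at which they differ are precisely the wildcard positions; hence the generating variable word is uniquely recovered from $L$. Under this identification, a subset $\mathcal{L}$ of the lines of $[k]^n$ gives a $2$-coloring of the $1$-variable words of length $n$: color a line $1$ if it belongs to $\mathcal{L}$ and $2$ otherwise.

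Next I would invoke the Graham--Rothschild theorem \cite{GR}, which in its version for variable words asserts that for every $k\meg 2$, all $1\mik d\mik m$ and every $r\meg 1$ there exists $n$ such that every $r$-coloring of the $d$-variable words of length $n$ over $[k]$ admits an $m$-variable word $z$ of length $n$ all of whose $d$-variable subwords, that is, all variable words of length $n$ obtained by substituting a $d$-variable word of length $m$ into $z$, receive the same color. Applying this with $d=1$, $r=2$ and the coloring described above yields an $m$-variable word $z$ such that all the $1$-variable words arising from $z$ have the same color. Translating back via the bijection and noting that the $1$-variable subwords of $z$ correspond precisely to the combinatorial lines contained in the subspace $V$ generated by $z$, one concludes that either $\lines(V)\subseteq\mathcal{L}$ or $\lines(V)\cap\mathcal{L}=\varnothing$, as required.

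There is no substantive obstacle here: the entire content of the proposition is the Graham--Rothschild theorem in the particular case $d=1$, $r=2$. The only point worth checking carefully is the correspondence between $1$-variable subwords of $z$ and combinatorial lines of the subspace $V$, which is routine from the definition of substitution for variable words.
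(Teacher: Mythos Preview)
Your proposal is correct and matches the paper's treatment: the paper introduces Proposition~\ref{p2} explicitly as a consequence of the Graham--Rothschild Theorem and does not give a detailed proof, merely pointing to \cite[Theorem 2.4.1]{McC} and \cite[\S 4]{PV} for self-contained arguments via iterated applications of the Hales--Jewett Theorem. Your direct invocation of Graham--Rothschild with $d=1$, $r=2$ is exactly the intended deduction.
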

Proposition \ref{p2} can be proved by repeated applications of the Hales--Jewett Theorem much in the spirit of Ramsey's classical Theorem;
see, e.g., \cite[Theorem 2.4.1]{McC}. Another excellent and short proof can be found in \cite[\S 4]{PV}. Also we notice that there exist
reasonable upper bounds for the numbers $\gr(k,m)$. Specifically, it follows from the work of S. Shelah \cite{Sh} that there exists a primitive
recursive function $\phi\colon\nn^2\to\nn$ belonging to the class $\mathcal{E}^{6}$ of Grzegorczyk's hierarchy such that for every integer
$k\meg 2$ and every integer $m\meg 1$ we have $\gr(k,m)\mik \phi(k,m)$.


\section{Preliminary tools}

In this section we will gather some preliminary tools which are needed for the proof of Theorem \ref{t1} but are not directly related to the
main argument. To simplify the exposition, below and in the rest of the paper, we will write ``$\dhj_k$" to denote the proposition that for
every  $0<\delta\mik 1$ the number $\dhj(k,\delta)$ is finite.

The first result, taken from \cite{FK2}, asserts that the density Hales--Jewett Theorem implies its multidimensional version.
\begin{prop} \label{p3}
Let $k\in\nn$ with $k\meg 2$ and assume $\dhj_k$. Then for every integer $m\meg 1$ and every $0<\delta\mik 1$
there exists an integer $\mdhj(k,m,\delta)$ with the following property. If $n\meg \mdhj(k,m,\delta)$, then every subset $A$ of $[k]^n$
of density at least $\delta$ contains an $m$-dimensional subspace of $[k]^n$.
\end{prop}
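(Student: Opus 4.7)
The plan is to argue by induction on $m$. The base case $m=1$ is exactly the statement $\dhj_k$, so we may set $\mdhj(k,1,\delta)=\dhj(k,\delta)$. For the inductive step, assume that $\mdhj(k,m,\delta')$ is finite for every $0<\delta'\mik 1$, and fix $0<\delta\mik 1$; we seek a bound on $\mdhj(k,m+1,\delta)$.

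Given a subset $A\subseteq[k]^n$ of density at least $\delta$, I would split $n=n_1+n_2$ where $n_2:=\mdhj(k,m,\delta/2)$, to be thought of as the ``inner'' coordinates carrying an $m$-dimensional subspace, and $n_1$ as the ``outer'' coordinates carrying an extra combinatorial line. For each $x\in[k]^{n_1}$ set $A_x:=\{y\in[k]^{n_2}:x^{\con}y\in A\}$. Since $\ave_x\dens(A_x)=\dens(A)\meg\delta$, a standard averaging argument shows that the set $B:=\{x\in[k]^{n_1}:\dens(A_x)\meg\delta/2\}$ has density at least $\delta/2$ in $[k]^{n_1}$. By the inductive hypothesis, for every $x\in B$ there is an $m$-dimensional subspace $W_x$ of $[k]^{n_2}$ contained in $A_x$. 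The total number $M$ of $m$-dimensional subspaces of $[k]^{n_2}$ depends only on $k$ and $n_2$, hence is finite. A pigeonhole over $B$ then produces a single $m$-dimensional subspace $W\subseteq[k]^{n_2}$ such that the set $B':=\{x\in B:W\subseteq A_x\}$ has density at least $\delta/(2M)$ in $[k]^{n_1}$.

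At this point I would choose $n_1:=\dhj(k,\delta/(2M))$ and apply $\dhj_k$ to $B'$, obtaining a combinatorial line $L=\{\ell(i):i\in[k]\}$ of $[k]^{n_1}$ contained in $B'$. Writing $W=\{z(a_1,\dots,a_m):a_1,\dots,a_m\in[k]\}$ for a suitable $m$-variable word $z$ of $[k]^{n_2}$, the concatenated word $\ell^{\con}z$ is an $(m+1)$-variable word of $[k]^{n_1+n_2}$ whose associated $(m+1)$-dimensional subspace $\{\ell(i)^{\con}z(a_1,\dots,a_m):i,a_1,\dots,a_m\in[k]\}$ lies entirely in $A$ by construction of $L$ and $W$. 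Taking $N:=n_1+n_2$ yields the required bound on $\mdhj(k,m+1,\delta)$.

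There is no real obstacle here beyond bookkeeping of the parameters; the only delicate point is that the pigeonhole step is applied to the \emph{finite} class of $m$-dimensional subspaces of $[k]^{n_2}$, which is why we fix $n_2$ first (depending on $k,m,\delta$) and only then choose $n_1$ large enough to absorb the density loss $1/(2M)$ inflicted by the pigeonhole.
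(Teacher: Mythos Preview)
Your proof is correct and follows the same product-and-pigeonhole scheme as the paper, with one harmless twist: you and the paper swap the roles of the two blocks. The paper puts a block of length $M=\dhj(k,\delta/2)$ on the \emph{right}, finds for each left-prefix a combinatorial \emph{line} in that block, pigeonholes over the at most $(k+1)^M$ lines of $[k]^M$, and then applies the inductive hypothesis to the long left block to obtain an $m$-dimensional subspace there. You instead fix the right block of length $n_2=\mdhj(k,m,\delta/2)$, find for each left-prefix an $m$-dimensional \emph{subspace} in that block, pigeonhole over the finitely many $m$-dimensional subspaces of $[k]^{n_2}$, and then apply $\dhj_k$ to the left block to obtain a line. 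Both orderings produce an $(m+1)$-dimensional subspace by concatenation; the paper's version has the mild advantage that the pigeonhole is over the smaller and more explicit set of lines (hence the clean factor $(k+1)^{-M}$ in the density), whereas your $M$ counts all $m$-dimensional subspaces of $[k]^{n_2}$ and is less transparent, but this affects only the bounds, not the validity. One small point of bookkeeping: you phrase the argument for $n=n_1+n_2$ exactly, but since $n_2$ is fixed and the DHJ bound is monotone, the same argument runs for every $n\meg n_1+n_2$ by taking the left block of length $n-n_2$.
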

\begin{proof}
By induction on $m$. The case ``$m=1$" is the content of $\dhj_k$. Let $m\in\nn$ with $m\meg 1$ and assume that the result has been proved
up to $m$. For every $0<\delta\mik 1$ we set $\mdhj(k,m+1,\delta)=M+\mdhj( k,m, \delta 2^{-1} (k+1)^{-M})$ where $M=\dhj(k,\delta/2)$.
We claim that with this choice the result follows. Indeed, let $n\meg \mdhj(k,m+1,\delta)$ and fix a subset $A$ of $[k]^n$ with
$\dens(A)\meg\delta$. For every $x\in [k]^{n-M}$ let $A_x=\{y\in [k]^M: x^{\con}y\in A\}$. Notice that $\ave_x \dens(A_x)\meg\delta$.
Therefore, there exists a subset $B$ of $[k]^{n-M}$ with $\dens(B)\meg \delta/2$ such that for every $x\in B$ we have $\dens(A_x)\meg\delta/2$.
By the choice of $M$, for every $x\in B$ there exists a combinatorial line $\ell_x$ of $[k]^M$ such that $\ell_x\subseteq A_x$. The number
of combinatorial lines of $[k]^M$ is less than $(k+1)^M$. Therefore, there exist a combinatorial line $\ell$ of $[k]^M$ and a subset $C$
of $B$ with $\dens(C)\meg \delta2^{-1}(k+1)^{-M}$ such that $\ell\subseteq A_x$ for every $x\in C$. Since $n-M\meg \mdhj( k,m, \delta 2^{-1}
(k+1)^{-M})$ there exists an $m$-dimensional subspace $W$ of $[k]^{n-M}$ with $W\subseteq C$. We set $V=W^{\con}\ell$. Then $V$ is an
$(m+1)$-dimensional subspace of $[k]^n$ and clearly $V\subseteq A$. The proof is completed.
\end{proof}
The second result asserts that every dense subset of $[k]^n$ becomes extremely uniformly distributed when restricted to a suitable
subspace of $[k]^n$.
\begin{lem} \label{l4}
Let $k,m\in\nn$ with $k\meg 2$ and $m\meg 1$. Also let $0<\ee<1$. If $n\meg \ee^{-1}k^mm$, then for every subset $A$ of $[k]^n$ with $\dens(A)>\ee$
there exist some $l<n$ and an $m$-dimensional subspace $V$ of $[k]^l$ such that for every $x\in V$ we have $\dens(A_x)\meg \dens(A)-\ee$ where
$A_x=\{y\in [k]^{n-l}: x^{\con} y\in A\}$.
\end{lem}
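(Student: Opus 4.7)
The plan is a greedy iteration that extends a prefix one block of $m$ coordinates at a time. Write $\delta=\dens(A)$ and, for $x\in [k]^l$ with $l\mik n$, put $\delta(x):=\dens(A_x)$; observe the elementary averaging identity $\delta(x)=\ave_{y\in [k]^m}\delta(x^\con y)$ whenever $l\mik n-m$. I construct prefixes $x_0,x_1,\dots$ with $x_t\in [k]^{tm}$ maintaining the invariant $\delta(x_t)\meg\delta$, starting from the empty prefix $x_0$ (so that $\delta(x_0)=\delta$) and stopping as soon as every extension $x_t^\con y$ with $y\in [k]^m$ satisfies $\delta(x_t^\con y)\meg\delta-\ee$.

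At each step $t$ there are two cases. In \emph{Case (a)}, every $y\in [k]^m$ gives $\delta(x_t^\con y)\meg\delta-\ee$; we then set $l=(t+1)m$ and take $V=\{x_t^\con y:y\in [k]^m\}$, which is the $m$-dimensional subspace of $[k]^l$ generated by the $m$-variable word obtained by concatenating $x_t$ with $v_1\cdots v_m$. This $V$ satisfies the conclusion of the lemma by construction. In \emph{Case (b)}, there exists $y^\ast\in [k]^m$ with $\delta(x_t^\con y^\ast)<\delta-\ee$, and we set $x_{t+1}=x_t^\con y$ for $y\in [k]^m$ maximising $\delta(x_t^\con y)$.

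The density-increment estimate driving the argument is that, in Case (b), the average $\ave_{y}\delta(x_t^\con y)=\delta(x_t)\meg\delta$ while one term lies strictly below $\delta-\ee$; averaging the remaining $k^m-1$ terms yields
\[
\delta(x_{t+1})\meg\delta(x_t)+\frac{\delta(x_t)-(\delta-\ee)}{k^m-1}\meg\delta(x_t)+\frac{\ee}{k^m}.
\]
Thus the invariant is preserved and, after $t$ consecutive occurrences of Case (b), $\delta(x_t)\meg\delta+t\ee/k^m$. Since $\delta(x_t)\mik 1$, Case (b) can repeat at most $(1-\delta)k^m/\ee$ times.

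The hypothesis $n\meg\ee^{-1}k^m m$ gives $\lfloor n/m\rfloor\meg k^m/\ee$, while the assumption $\delta>\ee$ gives $(1-\delta)k^m/\ee<k^m/\ee-k^m$; together these leave room for Case (a) to occur at some step $t^\ast$ with $(t^\ast+1)m<n$, producing the required $l$ and $V$. The argument has no deep obstacle; the only things to check are the pigeonhole estimate above and a short index calculation to ensure $l<n$ rather than merely $l\mik n$ (which is where the strict inequality $\delta>\ee$ enters).
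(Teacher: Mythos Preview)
Your proof is correct and follows essentially the same approach as the paper's: a greedy prefix extension by blocks of $m$ coordinates, with a density-increment step whenever the current $m$-cube fails the conclusion, terminating because densities are bounded above by~$1$. The only cosmetic differences are that the paper uses the slightly sharper increment $\varrho=\ee/(k^m-1)$ in place of your $\ee/k^m$, and phrases the iteration in terms of the subspaces $V_j=x_{j-1}^{\con}[k]^m$ rather than the prefixes $x_t$; the termination and index bookkeeping are equivalent.
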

\begin{proof}
We set $V_1=[k]^m$ and we observe that $\ave_{x\in V_1} \dens(A_x)=\dens(A)$. Also let $\varrho=\ee(k^m-1)^{-1}$. If $V_1$ does not satisfy the
requirements of the lemma, then there exists $x_1\in V_1$ such that $\dens(A_{x_1})\meg \dens(A)+\varrho$. Next we set $V_2=x_1^{\con}[k]^m$ and
we notice that $\ave_{x\in V_2} \dens(A_x) \meg\dens(A)+\varrho$. If $V_2$ does not satisfy the requirements of the lemma, then there exists
$x_2\in V_2$ such that $\dens(A_{x_2})\meg \dens(A)+2\varrho$. This process must, of course, terminate after at most $\lfloor\varrho^{-1}\rfloor$ iterations. Noticing that $(\lfloor\varrho^{-1}\rfloor+1)m<n$ the result follows.
\end{proof}
Combining Proposition \ref{p3} and Lemma \ref{l4} we get the following corollary.
\begin{cor} \label{c5}
Let $k\in\nn$ with $k\meg 2$ and assume $\dhj_k$. Then for every integer $m\meg 1$ and every $0<\delta\mik 1$ there exists an integer
$\mdhj^*(k,m,\delta)$ with the following property. If $n\meg \mdhj^*(k,m,\delta)$, then for every subset $A$ of $[k+1]^n$ of density
at least $\delta$ there exists an $m$-dimensional subspace $V$ of $[k+1]^n$ such that $V\upharpoonright k$ is contained in $A$.
\end{cor}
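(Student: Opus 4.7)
The plan is to use Lemma~\ref{l4} to produce a large prefix subspace of $[k+1]^n$ over which $A$ is uniformly dense on tails, and then invoke Proposition~\ref{p3} inside the natural copy of $[k]^N$ that sits inside this prefix subspace.

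Set $N=\mdhj(k,m,\delta/2)$ and $\ee=\delta/2$. Applying Lemma~\ref{l4} to $A$, with parameter $N$ in place of $m$ and with this choice of $\ee$, gives (for $n$ large enough) some $l<n$ and an $N$-dimensional subspace $V_0$ of $[k+1]^l$ such that $\dens(A_x)\meg\delta/2$ for every $x\in V_0$, where $A_x=\{y\in [k+1]^{n-l}:x^{\con}y\in A\}$. The $N$-variable word $z_0$ generating $V_0$ induces a bijection $[k+1]^N\to V_0$ under which $[k]^N$ is identified with $V_0\upharpoonright k$. Averaging the inequality $\dens(A_x)\meg\delta/2$ over $x\in V_0\upharpoonright k$ and switching the order of the two resulting averages produces a specific tail $t^\ast\in [k+1]^{n-l}$ for which the set $\tilde{A}=\{x\in V_0\upharpoonright k : x^{\con} t^\ast\in A\}$ has cardinality at least $(\delta/2)\, k^N$.

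Transferring $\tilde{A}$ through the canonical identification $[k]^N\cong V_0\upharpoonright k$ gives a subset $B$ of $[k]^N$ of density at least $\delta/2$. Since $N=\mdhj(k,m,\delta/2)$, Proposition~\ref{p3} produces an $m$-dimensional subspace $U$ of $[k]^N$ contained in $B$. Pulling $U$ back through the identification and concatenating with the constant tail $t^\ast$ yields an $m$-dimensional subspace $V$ of $[k+1]^n$ with $V\upharpoonright k\subseteq A$, as required.

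The delicate step is the final composition. Given the $N$-variable word $z_0$ generating $V_0$ and the $m$-variable word $z_U\in([k]\cup\{v_1,\ldots,v_m\})^N$ generating $U$, one has to substitute the $N$ coordinates of $z_U$ into the variables $u_1,\ldots,u_N$ of $z_0$ and verify that the length-$l$ word thus produced is genuinely $m$-variable; this reduces to observing that every $v_j$ occurs in $z_U$ and every $u_i$ occurs in $z_0$, so each $v_j$ survives the substitution and appears at least once. The remaining quantitative point is routine: one takes $\mdhj^*(k,m,\delta)$ to be any integer at least $(2/\delta)(k+1)^N N$ with $N=\mdhj(k,m,\delta/2)$, so as to meet the size requirement of Lemma~\ref{l4}.
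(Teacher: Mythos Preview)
Your proof is correct and essentially identical to the paper's own argument: both set $N=\mdhj(k,m,\delta/2)$, apply Lemma~\ref{l4} with $\ee=\delta/2$ to obtain an $N$-dimensional subspace $V_0$ of $[k+1]^l$ on which all sections $A_x$ have density at least $\delta/2$, average over $V_0\upharpoonright k$ to find a tail $t^\ast$ for which $A\cap\big((V_0\upharpoonright k)^{\con}t^\ast\big)$ has relative density at least $\delta/2$, and then apply Proposition~\ref{p3} inside the copy of $[k]^N$ to extract the desired $m$-dimensional subspace. The paper's write-up is a bit terser about the final composition step (it simply invokes ``the unique $m$-dimensional subspace $V$ of $[k+1]^n$ with $V\upharpoonright k=\tilde{V}$''), but your more explicit verification via substitution of variable words is a welcome elaboration of the same point.
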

\begin{proof}
We set $\mdhj^*(k,m,\delta)=(\delta/2)^{-1}(k+1)^M M$ where $M=\mdhj(k,m,\delta/2)$. Let $n\meg \mdhj^*(k,m,\delta)$ and fix a subset
$A$ of $[k+1]^n$ with $\dens(A)\meg \delta$. By Lemma \ref{l4}, there exist some $l<n$ and an $M$-dimensional subspace $W$ of $[k+1]^l$
such that $\dens(A_x)\meg \delta/2$ for every $x\in W$. We set $Z=W\upharpoonright k$. On the one hand, we have $|A\cap (Z^{\con} [k+1]^{n-l})|
\meg (\delta/2) |Z^{\con}[k+1]^{n-l}|$ since $\dens(A_x)\meg\delta/2$ for every $x\in Z$. On the other hand, the family $\{Z^{\con}y:
y\in [k+1]^{n-l}\}$ forms a partition of $Z^{\con}[k+1]^{n-l}$ into sets of equal size. Hence, there exists $y_0\in [k+1]^{n-l}$ such that
$|A\cap (Z^{\con} y_0)|\meg (\delta/2) |Z^{\con}y_0|$. Observe that $Z^{\con}y_0$ is isomorphic to $[k]^M$. Thus, by the choice of $M$,
there exists an $m$-dimensional subspace $\tilde{V}$ of $Z^{\con}y_0$ such that $\tilde{V}\subseteq A$. Let $V$ be the unique $m$-dimensional
subspace of $[k+1]^n$ with $V\upharpoonright k=\tilde{V}$. Then $V$ is as desired.
\end{proof}


\section{Proof of Theorem \ref{t1}}

The proof proceeds by induction on $k$. The case ``$k=2$" follows from the classical Sperner Theorem \cite{Sp}. So let $k\in\nn$ with $k\meg 2$
and assume $\dhj_k$. First we introduce some numerical invariants. Specifically, for every $0<\delta\mik 1$ we set
\begin{equation} \label{e1}
m_0=\dhj(k,\delta/4), \ \ \theta=\frac{\delta/4}{(k+1)^{m_0}-k^{m_0}}, \ \ \eta=\frac{\delta\theta}{48} \ \ \text{ and } \
\gamma=\frac{\delta\eta^2}{k}.
\end{equation}
The main step of the proof of $\dhj_{k+1}$ is the following dichotomy.
\begin{prop} \label{p6}
Let $k\in\nn$ with $k\meg 2$ and assume $\dhj_k$. Then for every $0<\delta\mik 1$ and every integer $d\meg 1$ there exists an integer $N(k,d,\delta)$
with the following property. If $n\meg N(k,d,\delta)$ and $A$ is a subset of $[k+1]^n$ with $\dens(A)\meg \delta$, then either $A$ contains a
combinatorial line of $[k+1]^n$, or there exists a $d$-dimensional subspace $V$ of $[k+1]^n$ such that $\dens_V(A)\meg \delta+\gamma/2$ where
$\gamma$ is as in \eqref{e1}.
\end{prop}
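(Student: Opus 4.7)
Plan. The proof follows the density-increment framework of Polymath. Assume $A\subseteq [k+1]^n$ has $\dens(A)\meg \delta$ and contains no combinatorial line of $[k+1]^n$; our task is to produce a $d$-dimensional subspace $V$ of $[k+1]^n$ with $\dens_V(A)\meg \delta+\gamma/2$. The argument proceeds in three phases: \emph{uniformize}, \emph{correlate} with a structured set via Corollary~\ref{c5}, and \emph{partition} that structured set into subspaces.

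First, apply Lemma~\ref{l4} with a suitable small $\ee$ (of order $\gamma$) and then Proposition~\ref{p2} (Graham--Rothschild) to the two-coloring of combinatorial lines by ``is this line contained in $A$?''. Either a line of $A$ appears (in which case we are done), or we reduce to a large ambient subspace $U$ of $[k+1]^n$ in which no line lies inside $A$, while every slice of $U$ still has $A$-density at least $\delta/2$. Next, on each such slice $A_y$ of $U$, apply Corollary~\ref{c5} with parameter $m_0=\dhj(k,\delta/4)$ to produce an $m_0$-dimensional sub-subspace $V_y$ of the slice satisfying $V_y\upharpoonright k\subseteq A_y$. The no-line hypothesis in $U$ then forces $V_y\setminus V_y\upharpoonright k$ to lie entirely in $A_y^c$: given $x\in V_y$ with some coordinates equal to $k+1$, making precisely those coordinates variable produces a combinatorial line whose first $k$ points lie in $V_y\upharpoonright k\subseteq A_y$, so the $(k+1)$-st point $x$ must lie outside $A_y$.

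Summing the ``tops'' $V_y\setminus V_y\upharpoonright k$ over slices yields a structured set $T\subseteq U$ whose relative size scales with the factor $(k+1)^{m_0}-k^{m_0}$ appearing in the denominator of $\theta$ in~\eqref{e1}; by construction $T\cap A=\varnothing$. A direct averaging then gives that $A$ has density at least $\delta+\gamma$ on $U\setminus T$. The last task is to partition this complement (or an appropriate structured subset containing a constant fraction of it) into $d$-dimensional subspaces of $[k+1]^n$, so that pigeonhole produces a subspace with $\dens_V(A)\meg \delta+\gamma/2$. This final partitioning is the Ajtai--Szemer\'{e}di-style greedy algorithm of Polymath and, as noted in the introduction, is used here without modification.

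The principal obstacle, as in Polymath, is the partitioning step: turning a correlation with a structured set into a density increment on a bona-fide $d$-dimensional subspace requires the careful greedy construction of Polymath. The quantitative matching to the specific constants $m_0,\theta,\eta,\gamma$ in~\eqref{e1} is a matter of bookkeeping. By contrast, the correlation phase via Corollary~\ref{c5} is substantially cleaner than Polymath's equal-slices argument, and this is precisely where the simplification in the present paper arises.
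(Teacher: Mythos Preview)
Your outline has two genuine gaps, and together they show that the correlation step you sketch is not the one the paper (or Polymath) uses.

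\medskip
\textbf{The ``tops'' have negligible density.} After Lemma~\ref{l4} you have a subspace $U$ of $[k+1]^l$ and sections $A_u\subseteq[k+1]^{n-l}$ with $\dens(A_u)\meg\delta/2$. Applying Corollary~\ref{c5} to each $A_u$ gives an $m_0$-dimensional subspace $V_u$ of $[k+1]^{n-l}$; its ``top'' $V_u\setminus V_u\!\upharpoonright\! k$ has cardinality exactly $(k+1)^{m_0}-k^{m_0}$, a constant independent of $n$. Hence the set $T=\bigcup_{u\in U}\{u\}^\con(V_u\setminus V_u\!\upharpoonright\! k)$ has density $\big((k+1)^{m_0}-k^{m_0}\big)/(k+1)^{n-l}$ in the natural ambient space $U^\con[k+1]^{n-l}$, which tends to $0$ with $n$. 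Removing a set of density $o(1)$ cannot produce a density increment of order $\gamma$. (Pigeonholing to make all $V_u$ equal does not help: the number of $m_0$-dimensional subspaces of $[k+1]^{n-l}$ is enormous.) The paper gets around this by never leaving a fixed subspace $W$ of dimension $m$: it first shows, via the Erd\H{o}s--Hajnal viewpoint, that $\dens\big(\bigcap_{u\in\ell}A_u\big)\meg\theta$ for \emph{every} line $\ell$ of $U\!\upharpoonright\! k$ (Lemma~\ref{l7}), then averages to find a single $y_0$ with a $\theta/2$-fraction of lines of $(U\!\upharpoonright\! k)^\con y_0$ inside $A$ (Lemma~\ref{l8}). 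The resulting set $C$ has density $\meg\theta/4$ \emph{inside $W=U^\con y_0$}. Your application of Graham--Rothschild to the coloring ``is $\ell\subseteq A$?'' is also off: in Lemma~\ref{l7} Proposition~\ref{p2} is applied to the coloring ``is $\dens\big(\bigcap_{v\in\ell}A_v\big)\meg\theta$?'', which is what propagates one good line to all lines of a subspace.

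\medskip
\textbf{The partition step needs insensitive sets.} Even granting a set $T$ of constant density disjoint from $A$, you propose to partition $U\setminus T$ (``or an appropriate structured subset'') into $d$-dimensional subspaces and invoke Polymath's algorithm as a black box. But that algorithm (Lemma~\ref{l12} and Corollary~\ref{c13}) applies specifically to sets of the form $D=\bigcap_{i=1}^k D_i$ with each $D_i$ $(i,k+1)$-insensitive; your $T$ and $U\setminus T$ have no such structure. In the paper the set $C$ built in Lemma~\ref{l10} is \emph{by construction} an intersection of insensitive sets (via the maps $x\mapsto x^{k+1\to i}$), and Corollary~\ref{c11} then manufactures the correlating set $D$ in exactly this form. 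This is not bookkeeping but the core mechanism linking the correlation and partition phases, and it is missing from your outline.
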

Using Proposition \ref{p6} the numbers $\dhj(k+1,\delta)$ can be estimated easily via a standard iteration. And, of course, this is enough to
complete the proof of the density Hales--Jewett Theorem.

It remains to prove Proposition \ref{p6}. This is our goal in the following subsection.

\subsection{Proof of Proposition \ref{p6}}

The proof is based on a series of lemmas. \textit{We emphasize that, in what follows, we will assume $\dhj_k$}. Also, for every integer
$m\meg 1$ and every $0<\ee\mik 1$ we set
\begin{equation} \label{e2}
n(m,\ee)=\ee^{-1}(k+1)^m m.
\end{equation}
We start with the following lemma.
\begin{lem}\label{l7}
Let $0<\delta\mik 1$ and  $m\in\nn$ with $m\meg m_0$. If $n\meg n(\mathrm{GR}(k,m),\eta^2/2)$, then for every subset $A$ of $[k+1]^n$
with $\mathrm{dens}(A)\meg \delta$ there exist some $l<n$ and an $m$-dimensional subspace $U$ of $[k+1]^{l}$ such that
\begin{enumerate}
\item[(a)] for every $u\in U$ we have $\mathrm{dens}(A_u)\meg\delta-\eta^2/2$, and
\item[(b)] for every $\ell\in \mathrm{Lines}(U\upharpoonright k)$ we have $\mathrm{dens}\big(\bigcap_{u\in\ell} A_u\big)\meg \theta$,
\end{enumerate}
where, as in Lemma \ref{l4}, $A_u=\{y\in [k+1]^{n-l}:u^{\con}y\in A\}$ for every $u\in U$.
\end{lem}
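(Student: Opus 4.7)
The plan is a three-step reduction. First I would apply Lemma \ref{l4} to secure condition (a) on an auxiliary subspace of dimension far larger than $m$. Then I would $2$-color the combinatorial lines of its ``$\upharpoonright k$'' part according to whether (b) holds, and use the Graham--Rothschild theorem (Proposition \ref{p2}) to pass to a monochromatic $m$-dimensional sub-subspace $U\upharpoonright k$. The ``all good'' branch gives $U$ directly; the work is in ruling out the ``all bad'' branch by invoking $\dhj_k$.

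Concretely, set $M=\gr(k,m)$. The assumption $n\meg n(M,\eta^2/2)$ together with Lemma \ref{l4} (applied in the alphabet $[k+1]$ with $\ee=\eta^2/2$) yields some $l<n$ and an $M$-dimensional subspace $W$ of $[k+1]^l$ such that $\dens(A_u)\meg \delta-\eta^2/2$ for every $u\in W$. Identifying $W\upharpoonright k$ with $[k]^M$, I would call a line $\ell$ of $W\upharpoonright k$ \emph{good} if $\dens\big(\bigcap_{u\in\ell}A_u\big)\meg\theta$ and \emph{bad} otherwise. Since $M=\gr(k,m)$, Proposition \ref{p2} produces an $m$-dimensional monochromatic subspace of $W\upharpoonright k$; this subspace has the form $U\upharpoonright k$ for a unique $m$-dimensional subspace $U$ of $W$. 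Property (a) for $U$ is inherited from $W$, and if the monochromatic color is ``good'' then (b) holds by construction and $U$ is the required subspace.

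It remains to exclude the ``all bad'' case. Assume $\dens\big(\bigcap_{u\in\ell}A_u\big)<\theta$ for every $\ell\in\lines(U\upharpoonright k)$ and fix an $m_0$-dimensional subspace $U_0$ of $U\upharpoonright k$ (possible because $m\meg m_0$), so $U_0\cong[k]^{m_0}$. For $y\in[k+1]^{n-l}$ set $B_y=\{u\in U_0:u^{\con}y\in A\}$. Fubini gives $\ave_y\dens_{U_0}(B_y)=\ave_{u\in U_0}\dens(A_u)\meg\delta-\eta^2/2$, and because $\eta$ is very small compared to $\delta$ a direct Markov estimate forces the set $Y=\{y:\dens_{U_0}(B_y)\meg\delta/4\}$ to satisfy $\dens(Y)\meg\delta/2$. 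For each $y\in Y$, the hypothesis $\dhj_k$ applied with $m_0=\dhj(k,\delta/4)$ places a combinatorial line $\ell_y$ of $U_0$ inside $B_y$. Since $U_0$ has at most $(k+1)^{m_0}-k^{m_0}$ combinatorial lines, pigeonholing the assignment $y\mapsto\ell_y$ produces a single line $\ell\in\lines(U_0)$ with
\[
\dens\Big(\bigcap_{u\in\ell}A_u\Big) \;\meg\; \frac{\delta/2}{(k+1)^{m_0}-k^{m_0}} \;=\; 2\theta,
\]
contradicting the assumption that $\ell$ (viewed as a line of $U\upharpoonright k$) is bad. The only genuine obstacle is the bookkeeping in this last step: the constants $\theta$ and $\eta$ in \eqref{e1} are calibrated precisely so that the Markov loss and the loss from pigeonholing over the lines of $[k]^{m_0}$ can both be absorbed while still strictly clearing the threshold~$\theta$.
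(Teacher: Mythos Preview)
Your proposal is correct and follows essentially the same approach as the paper: apply Lemma~\ref{l4} to secure~(a) on a $\gr(k,m)$-dimensional subspace, use Proposition~\ref{p2} to pass to a monochromatic $m$-dimensional sub-subspace, and rule out the ``all bad'' case by a Fubini/Markov argument together with $\dhj_k$ and pigeonholing over the $(k+1)^{m_0}-k^{m_0}$ lines of an $m_0$-dimensional $[k]$-subspace. The only differences are cosmetic constants (you obtain $\dens(Y)\meg\delta/2$ and a final bound $2\theta$, whereas the paper records $\delta/4$ and $\theta$), and both versions clear the threshold~$\theta$.
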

\begin{proof}
We apply Lemma \ref{l4} and we get some $l<n$ and a subspace $V$ of $[k+1]^l$ of dimension $\mathrm{GR}(k,m)$ such that
$\mathrm{dens}(A_v)\meg \delta-\eta^2/2$ for every $v\in V$. We set
\[\mathcal{L}=\Big\{ \ell\in\mathrm{Lines}(V\upharpoonright k): \mathrm {dens}\Big(\bigcap_{v\in\ell} A_v\Big)\meg \theta\Big\}.\]
By Proposition \ref{p2}, there exists an $m$-dimensional subspace $Y$ of $V\upharpoonright k$ such that either
$\mathrm{Lines}(Y)\subseteq \mathcal{L}$ or $\mathrm{Lines}(Y)\cap \mathcal{L}=\varnothing$. If $\mathrm{Lines}(Y)\subseteq \mathcal{L}$,
then let $U$ be the unique subspace of $[k+1]^l$ such that $U\upharpoonright k=Y$. It is easily checked that $U$ satisfies the requirements
of the lemma.

Therefore the proof will be completed once we show that $\mathrm{Lines}(Y)\cap\mathcal{L}\neq \varnothing$. To this end, first, we select
an $m_0$-dimensional subspace $Z$ of $Y$. By the choice of $\eta$ in (\ref{e1}) and the fact that $Z\subseteq V$, we have $\mathrm{dens}(A_z)\meg\delta/2$ for every $z\in Z$. Hence there exists $B\subseteq [k+1]^{n-l}$ with $\dens(B)\meg\delta/4$
such that $|A\cap (Z^{\con}y)|\meg (\delta/4)|Z^{\con}y|$ for every $y\in B$. Let $y\in B$ be arbitrary. By the previous discussion
and the choice of $m_0$ in (\ref{e1}), there exists $\ell_y\in\lines(Z)$ such that $\ell_y^{\con}y\subseteq A$. The number of combinatorial
lines of $Z$ is $(k+1)^{m_0}-k^{m_0}$. It follows that there exist $\ell_0\in\lines(Z)$ and a subset $C$ of $B$ with $\dens(C)\meg \theta$
such that $\ell_0^{\con}y\subseteq A$ for every $y\in C$. This implies that $\ell_0\in \lines(Y)\cap \mathcal{L}$ and the proof is completed.
\end{proof}
The next result asserts that if we have ``lack of density increment", then we can find a subspace $W$ of $[k+1]^n$ of sufficiently large
dimension satisfying two properties. Firstly the density of $A$ inside $W$ is essentially the same as the density of $A$ in $[k+1]^n$
and, secondly, with plenty of lines contained in $A\cap (W\upharpoonright k)$.
\begin{lem}\label{l8}
Let $0<\delta\mik 1$ and $m\in\nn$ with $m\meg m_0$. Also let $n\meg n(\mathrm{GR}(k,m),\eta^2/2)$ and $A$ be a subset of $[k+1]^n$
with $\mathrm{dens}(A)\meg \delta$. Then either there exists an $m$-dimensional subspace $X$ of $[k+1]^n$ such that $\mathrm{dens}_X(A)\meg
\delta+\eta^2/2$, or there exists an $m$-dimensional subspace $W$ of $[k+1]^n$ such that $\mathrm{dens}_W(A)\meg \delta-2\eta$ and
\begin{equation} \label{e3}
|\{\ell\in\lines(W\upharpoonright k): \ell\subseteq A\}|\meg (\theta/2) |\lines(W\upharpoonright k)|.
\end{equation}
\end{lem}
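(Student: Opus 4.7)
The plan is to start from the ``prefix'' subspace $U\subseteq[k+1]^l$ produced by Lemma \ref{l7} and, by a simple averaging and Markov argument over the ``suffix'' $y\in[k+1]^{n-l}$, single out one $y$ for which the $m$-dimensional subspace $W_y:=U^\con y$ of $[k+1]^n$ meets the second alternative. The crucial observations are that $W_y\upharpoonright k=(U\upharpoonright k)^\con y$, that $\ell\mapsto \ell^\con y$ is a bijection from $\lines(U\upharpoonright k)$ onto $\lines(W_y\upharpoonright k)$, and that $\ell^\con y\subseteq A$ if and only if $y\in\bigcap_{u\in\ell}A_u$.

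Two averages drive the dichotomy. From property (a) of Lemma \ref{l7},
\[\ave_y \dens_{W_y}(A)=\ave_{u\in U}\dens(A_u)\meg \delta-\eta^2/2,\]
while if we set $c(y):=|\{\ell\in\lines(W_y\upharpoonright k):\ell\subseteq A\}|/|\lines(W_y\upharpoonright k)|$, then property (b) gives
\[\ave_y c(y)=\ave_{\ell\in\lines(U\upharpoonright k)}\dens\Bigl(\bigcap_{u\in\ell}A_u\Bigr)\meg \theta.\]

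If some $W_y$ satisfies $\dens_{W_y}(A)\meg \delta+\eta^2/2$, we are done by taking $X=W_y$. Otherwise $\dens_{W_y}(A)<\delta+\eta^2/2$ for every $y$, and a short convex combination estimate on the first average forces $\dens(B)\meg 1-\eta/2$ for $B:=\{y:\dens_{W_y}(A)\meg\delta-2\eta\}$; Markov's inequality applied to the second, together with $c(y)\mik 1$, yields $\dens(C)\meg \theta/2$ for $C:=\{y:c(y)\meg \theta/2\}$. The choice $\eta=\delta\theta/48$ with $\delta\mik 1$ gives $\theta/2>\eta/2$, so $\dens(B)+\dens(C)>1$; any $y\in B\cap C$ then provides the sought subspace $W=W_y$.

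The only real bookkeeping is checking that the constants align so as to force $B\cap C\neq\varnothing$, but this is precisely what the relations among $\delta,\theta,\eta$ in (\ref{e1}) were designed to guarantee, so the step is immediate.
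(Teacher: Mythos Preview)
Your proof is correct and follows essentially the same route as the paper's: apply Lemma~\ref{l7} to obtain $U$, then average over suffixes $y$ to find a single $y$ for which $W=U^\con y$ works, using the upper bound $\dens_{W_y}(A)<\delta+\eta^2/2$ from the failure of the first alternative together with a Markov-type argument on the line count. The only cosmetic differences are notation (your $B,C,c(y)$ are the paper's $H_1,H_2,|\mathcal{L}_y|/|\lines(U\upharpoonright k)|$) and the slightly sharper bound $\dens(B)\meg 1-\eta/2$ you extract, where the paper is content with $1-\eta$.
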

\begin{proof}
Clearly we may assume that $\dens_X(A)< \delta+\eta^2/2$ for every $m$-dimensional subspace $X$ of $[k+1]^n$. By Lemma \ref{l7}, there
exist some $l<n$ and an $m$-dimensional subspace $U$ of $[k+1]^l$ such that $\mathrm{dens}(A_u)\meg \delta-\eta^2/2$ for every $u\in U$
and $\mathrm{dens}\big(\bigcap_{u\in \ell} A_u\big)\meg \theta$ for every $\ell\in \lines(U\upharpoonright k)$.

The first property implies, in particular, that $\mathbb{E}_{y\in [k+1]^{n-l}}\mathrm{dens}_{U^\con y}(A)\meg \delta-\eta^2/2$.
For every $y\in [k+1]^{n-l}$ the set $U^\con y$ is an $m$-dimensional subspace of $[k+1]^n$. Thus, by our assumptions, we have
$\mathrm{dens}_{U^\con y}(A)< \delta+\eta^2/2$ for every $y\in [k+1]^{n-l}$. It follows that there exists a subset $H_1$ of $[k+1]^{n-l}$
with $\dens(H_1)\meg 1-\eta$ such that $\mathrm{dens}_{U^\con y}(A)\meg \delta-2\eta$ for every $y\in H_1$.

Now for every $y\in [k+1]^{n-l}$ let $\mathcal{L}_y=\{\ell\in\lines(U \upharpoonright k): y\in \bigcap_{u\in \ell} A_u\}$.
Since $\mathrm{dens}\big(\bigcap_{u\in \ell} A_u\big)\meg \theta$ for every $\ell\in \lines(U\upharpoonright k)$ we have
\[ \ave_{y\in [k+1]^{n-l}} \frac{|\mathcal{L}_y|}{|\lines(U\upharpoonright k)|} =
\ave_{\ell\in\lines(U\upharpoonright k)} \dens\Big( \bigcap_{u\in \ell} A_u\Big) \meg \theta. \]
Hence, there exists a subset $H_2$ of $[k+1]^{n-l}$ with $\dens(H_2)\meg\theta/2$ such that $|\mathcal{L}_y|\meg (\theta/2)|
\lines(U\upharpoonright k)|$ for every $y\in H_2$.

By the choice of $\theta$ and $\eta$ in (\ref{e1}), we have $\eta<\theta/2$. It follows that the set $H_1\cap H_2$ is nonempty.
We select $y_0\in H_1\cap H_2$ and we set $W=U^\con y_0$. It is easy to check that $W$ is as desired.
\end{proof}
From this point on the proof follows the steps of Polymath's proof. A crucial ingredient (perhaps the single most important one)
is the notion of an insensitive set which we are about to recall. To this end, we will need the following terminology.
Let $x,y\in [k+1]^n$ and write $x=(x_r)_{r=1}^n$ and $y=(y_r)_{r=1}^n$. Also let $i,j\in [k+1]$ with $i\neq j$. We say that $x$ and $y$
are \textit{$(i,j)$-equivalent} if for every $s\in [k+1]\setminus \{i,j\}$ we have $\{r\in [n]: x_r=s\}=\{r\in [n]: y_r=s\}$.
\begin{defn} \label{d9}
Let $i,j\in [k+1]$ with $i\neq j$ and $A$ be a subset of $[k+1]^n$. The set $A$ is said to be \emph{$(i,j)$-insensitive} provided that
for every $x\in A$ and every $y\in [k+1]^n$ if $x$ and $y$ are $(i,j)$-equivalent, then $y\in A$.

If $V$ is an $m$-dimensional subspace of $[k+1]^n$ and $A$ is a subset of $V$, then $A$ is said to be \emph{$(i,j)$-insensitive in} $V$
if, identifying $V$ with $[k+1]^m$, $A$ becomes an $(i,j)$-insensitive subset of $[k+1]^m$.
\end{defn}
It is easy to see that the family of all $(i,j)$-insensitive subsets of $[k+1]^n$ is closed under intersections, unions and complements.
The same remark, of course, applies to the family of all $(i,j)$-insensitive sets of a subspace $V$ of $[k+1]^n$.

Also we need to introduce some more numerical invariants. Precisely, for every $0<\delta\mik 1$ let $m_0$ and $\eta$ be as in (\ref{e1})
and set
\begin{equation} \label{e4}
\lambda=\frac{k+1}{k} \ \ \text{ and } \ \ M_0=\max\Big\{m_0, \frac{\log \eta^{-1}}{\log\lambda}\Big\}.
\end{equation}
We proceed with the following lemma.
\begin{lem} \label{l10}
Let $0<\delta\mik 1$ and $m\in\nn$ with  $m\meg M_0$. Let $n\meg n(\mathrm{GR}(k,m),\eta^2/2)$ and $A$ be a subset of $[k+1]^n$
with $\dens(A)\meg \delta$. Assume that $A$ contains no combinatorial line of $[k+1]^n$ and $\dens_X(A)< \delta+\eta^2/2$ for every
$m$-dimensional subspace $X$ of $[k+1]^n$. Then there exist an $m$-dimensional subspace $W$ of $[k+1]^n$ and a subset $C$ of $W$
satisfying the following properties.
\begin{enumerate}
\item [(a)] We have $\dens_W(C)\meg\theta/4$ and $C=\bigcap_{i=1}^k C_i$ where $C_i$ is $(i,k+1)$-insensitive in $W$ for every
$i\in [k]$.
\item [(b)] We have $\dens_W\big(A\cap(W\setminus C)\big) \meg (\delta+6\eta) \dens_W(W\setminus C)$ and, moreover,
$\mathrm{dens}_W\big(A\cap(W\setminus C)\big)\meg \delta-3\eta$.
\end{enumerate}
\end{lem}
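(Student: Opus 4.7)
The plan is to invoke Lemma~\ref{l8} to extract a subspace $W$ in which $A$ still has essentially full density and contains many lines of $W\upharpoonright k$, and then to build $C$ as the intersection of the natural $(i,k+1)$-insensitive hulls of $A\cap(W\upharpoonright k)$ for $i\in[k]$. The no-line hypothesis on $A$ will then force the ``genuinely $(k{+}1)$-charged'' part of $C$ to lie entirely outside $A$, while $C$ itself will be large enough (thanks to the line abundance from Lemma~\ref{l8}) to give~(a).

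Since the density-increment alternative in Lemma~\ref{l8} is ruled out by hypothesis, we obtain an $m$-dimensional subspace $W$ of $[k+1]^n$ with $\dens_W(A)\meg\delta-2\eta$ and with at least $(\theta/2)|\lines(W\upharpoonright k)|$ lines of $W\upharpoonright k$ contained in $A$. Identifying $W$ with $[k+1]^m$, for each $i\in[k]$ let $\pi_i\colon W\to W\upharpoonright k$ be the map that replaces every occurrence of $k+1$ by $i$, and set $C_i=\{x\in W:\pi_i(x)\in A\}$. Since $\pi_i(x)=\pi_i(y)$ holds precisely when $x$ and $y$ are $(i,k+1)$-equivalent, each $C_i$ is $(i,k+1)$-insensitive in $W$. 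Let $C=\bigcap_{i=1}^k C_i$.

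The structural observation driving the proof is that $C$ splits cleanly into two pieces. For $x\in W\upharpoonright k$ we have $\pi_i(x)=x$ for every $i$, so $x\in C$ iff $x\in A$; hence $C\cap(W\upharpoonright k)=A\cap(W\upharpoonright k)$. For $x\in W\setminus(W\upharpoonright k)$, the points $\pi_1(x),\ldots,\pi_k(x)$ form a combinatorial line $\ell_x$ of $W\upharpoonright k$ whose variable word $z^x$ has the variable $v$ exactly at the positions where $x_r=k+1$, and $z^x(k+1)$ recovers $x$. Thus $x\in C$ iff $\ell_x\subseteq A$, and in that case the no-line hypothesis on $A$ forces $x\notin A$. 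Writing $D$ for the set of such $x$, the map $x\mapsto\ell_x$ is a bijection between $D$ and the family of lines of $W\upharpoonright k$ contained in $A$, so $|D|\meg(\theta/2)\big((k+1)^m-k^m\big)$.

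With this decomposition the conclusion reduces to routine arithmetic using the invariants in~\eqref{e1} and~\eqref{e4}. Since $m\meg M_0\meg(\log\eta^{-1})/(\log\lambda)$ yields $\big(k/(k+1)\big)^m\mik\eta$, we get $\dens_W(A\cap C)=\dens_W(A\cap(W\upharpoonright k))\mik\eta$ and $\dens_W(C)\meg\dens_W(D)\meg(\theta/2)(1-\eta)\meg\theta/4$, which is~(a). For~(b), the bound $\dens_W(A\cap(W\setminus C))\meg(\delta-2\eta)-\eta=\delta-3\eta$ is immediate, and the first inequality of~(b) rearranges to $(\delta+6\eta)\dens_W(C)\meg 9\eta$, which follows from $\dens_W(C)\meg\theta/4$ combined with the identity $\eta=\delta\theta/48$ (so that $\theta/4=12\eta/\delta$). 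The single point deserving careful verification is the bijection $x\mapsto\ell_x$ between $D$ and the lines of $W\upharpoonright k$ inside $A$; everything else is bookkeeping with the numerical invariants.
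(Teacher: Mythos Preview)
Your proof is correct and follows essentially the same route as the paper's: the sets $C_i=\{x\in W:\pi_i(x)\in A\}$ coincide with the paper's $C_i=\{x\in W:x^{k+1\to i}\in A\cap(W\upharpoonright k)\}$, and your decomposition $C=(A\cap(W\upharpoonright k))\cup D$ is exactly the paper's $C=(A\cap(W\upharpoonright k))\cup B$. The only cosmetic difference is the last inequality in~(b): the paper bounds $(\delta-3\eta)/(1-\theta/4)\meg(\delta-3\eta)(1+\theta/4)\meg\delta+6\eta$ directly, whereas you rearrange to $(\delta+6\eta)\dens_W(C)\meg 9\eta$ and then invoke $\dens_W(C)\meg\theta/4=12\eta/\delta$; both are routine consequences of $\eta=\delta\theta/48$.
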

\begin{proof}
By our assumptions, we may apply Lemma \ref{l8} and we get an $m$-dimensional subspace $W$ of $[k+1]^n$ such that $\dens_W(A)\meg\delta-2\eta$
and satisfying inequality (\ref{e3}). For every $\ell\in\lines(W\upharpoonright k)$ let $\bar{\ell}$ be the unique combinatorial line of $W$
such that $\bar{\ell}\upharpoonright k=\ell$. Let $B=\{\bar{\ell}(k+1): \ell\in\lines(W\upharpoonright k) \text{ with } \ell\subseteq A\}$ and
set  $C=B\cup \big(A\cap (W\upharpoonright k)\big)$. We will show that $W$ and $C$ are as desired. First we argue for (a). Identifying $W$ with
$[k+1]^m$, for every $x\in W$ let $x^{{k+1}\to i}$ be the unique element of $W$ obtained by replacing all appearances of $k+1$ in $x$ by $i$.
Setting $C_i=\{x\in W: x^{{k+1}\to i}\in A\cap (W\upharpoonright k)\}$ for every $i\in [k]$, we see that $C_i$ is $(i,k+1)$-insensitive in $W$
and $C=C_1\cap ... \cap C_k$. Next observe that the map $\lines(W\upharpoonright k)\ni \ell\mapsto \bar{\ell}(k+1)\in W$ is one-to-one. Hence,
\begin{eqnarray*}
|C| & \meg & |B| = |\{\ell\in\lines(W\upharpoonright k):\ell\subseteq A\}| \stackrel{(\ref{e3})}{\meg} (\theta/2)|\lines(W\upharpoonright k)| \\
& = & (\theta/2)((k+1)^m-k^m) \stackrel{(\ref{e4})}{\meg}(\theta(1-\eta)/2)(k+1)^m \stackrel{(\ref{e1})}{\meg} (\theta/4)|W|.
\end{eqnarray*}
This shows that part (a) is satisfied. For part (b), notice first that our assumption that $A$ contains no combinatorial line
of $[k+1]^n$ implies that $A\cap C\subseteq W\upharpoonright k$. Therefore, $\dens_W(A\cap C)\mik \lambda^{-m}\mik \lambda^{-M_0}\mik \eta$.
Since $\dens_W(A)\meg\delta-2\eta$ we see that $\dens_W\big(A\cap (W\setminus C)\big)\meg \delta-3\eta$. Moreover,
\[ \frac{\dens_W\big(A\cap(W\setminus C)\big)}{\dens_W(W\setminus C)}\meg
\frac{\delta-3\eta}{1-\theta/4} \meg(\delta-3\eta)(1+\theta/4) \stackrel{(\ref{e1})}{\meg} \delta+6\eta\]
and the proof is completed.
\end{proof}
The following corollary completes the first part of the proof of Proposition \ref{p6}. It shows that if $A$ contains no combinatorial
line, then it must correlate significantly with a ``structured" subset of $[k+1]^n$.
\begin{cor} \label{c11}
Let $0<\delta\mik 1$ and $m\in\nn$ with $m\meg M_0$. Let $n\meg n(\mathrm{GR}(k,m),\eta^2/2)$ and $A$ be subset of $[k+1]^n$ with
$\mathrm{dens}(A)\meg \delta$. Assume that $A$ contains no combinatorial line of $[k+1]^n$. Then there exist an $m$-dimensional subspace
$W$ of $[k+1]^n$ and a family $\{D_1,...,D_k\}$ of subsets of $W$ such that $D_i$ is $(i,k+1)$-insensitive in $W$ for every $i\in [k]$ and,
moreover, setting $D=D_1\cap ...\cap D_k$ we have $\mathrm{dens}_W(D)\meg \gamma$ and $\mathrm{dens}_W(A\cap D) \meg (\delta+\gamma)\mathrm{dens}_W(D)$.
\end{cor}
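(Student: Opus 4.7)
The plan is to invoke the dichotomy of Lemma~\ref{l8} and dispose of the two alternatives separately. In the easy case, suppose some $m$-dimensional subspace $X$ of $[k+1]^n$ satisfies $\dens_X(A)\meg \delta+\eta^2/2$. Then I simply take $W=X$ and let each $D_i$ be $W$ itself; the entire subspace is trivially $(i,k+1)$-insensitive, so $D=W$ is of the required form, and both conclusions follow from $\gamma=\delta\eta^2/k\mik \eta^2/2$ (using $2\delta\mik k$).

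In the complementary case Lemma~\ref{l10} applies and delivers an $m$-dimensional subspace $W$ together with $(i,k+1)$-insensitive sets $C_1,\dots,C_k$ whose intersection $C$ satisfies $\dens_W(A\cap(W\setminus C))\meg(\delta+6\eta)\dens_W(W\setminus C)$ together with $\dens_W(A\cap (W\setminus C))\meg \delta-3\eta$. The key move is to refine the union $W\setminus C=\bigcup_{i=1}^k(W\setminus C_i)$ into a partition on which averaging can be applied cleanly. Setting, for each $i\in[k]$,
\[E_i=(W\setminus C_i)\cap \bigcap_{j<i}C_j,\]
one immediately checks that the $E_i$ are pairwise disjoint with union $W\setminus C$, and that each $E_i$ has the desired shape $D_1\cap\cdots\cap D_k$ where one chooses $D_i=W\setminus C_i$, $D_j=C_j$ for $j<i$ and $D_j=W$ for $j>i$; the insensitivity of each factor follows since complements of $(j,k+1)$-insensitive sets are again $(j,k+1)$-insensitive and $W$ is trivially so.

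I then run an averaging argument over this partition. Summing the density inequality from Lemma~\ref{l10}(b) yields $\sum_i \dens_W(A\cap E_i)\meg (\delta+6\eta)\sum_i\dens_W(E_i)$ together with $\sum_i\dens_W(E_i)=\dens_W(W\setminus C)\meg \delta-3\eta$. Discarding the indices with $\dens_W(E_i)<\gamma$ removes total mass at most $k\gamma$, and averaging over the remaining $E_i$'s (at least one of which survives) produces some index $i^*$ with $\dens_W(E_{i^*})\meg \gamma$ and $\dens_W(A\cap E_{i^*})\meg (\delta+\gamma)\dens_W(E_{i^*})$. The corollary then follows by taking $D=E_{i^*}$.

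The single delicate point is to verify that the $6\eta$ correlation surplus from Lemma~\ref{l10} is not wiped out by the mass $k\gamma$ thrown away when truncating the small $E_i$'s. This is exactly the calibration encoded in \eqref{e1}: since $\dens_W(W\setminus C)\meg \delta-3\eta\meg \delta/2$, the loss in the average is bounded by $k\gamma/(\delta/2)=2\eta^2$, which is dwarfed by $6\eta-\gamma$, and the desired increment of $\gamma$ survives. This parameter bookkeeping is really the only obstacle; the structural work has already been carried out in Lemmas~\ref{l7}, \ref{l8}, and \ref{l10}.
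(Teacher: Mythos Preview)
Your proof is correct and follows essentially the same route as the paper: the same case split via Lemmas~\ref{l8} and \ref{l10}, the identical partition $E_i=(W\setminus C_i)\cap\bigcap_{j<i}C_j$ (the paper calls these $P_i$), and the same expression of each $E_i$ as an intersection of insensitive sets. The only cosmetic difference is in the final pigeonhole step---the paper directly extracts an index $i_0$ with $\lambda_{i_0}\meg 3\eta/k$ and $\delta_{i_0}\meg\delta+3\eta$, whereas you discard the small pieces first and then average---but the arithmetic and the resulting bounds are the same.
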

\begin{proof}
First assume that there exist an $m$-dimensional subspace $X$ of $[k+1]^n$ such that $\mathrm{dens}_X(A)\meg \delta +\eta^2/2$. Then we set
$W=X$ and $D_i=X$ for every $i\in[k]$. Since $\eta^2/2\meg \gamma$, it is clear that with these choices the result follows.
Otherwise, by Lemma \ref{l10}, there exist an $m$-dimensional subspace $W$ of $[k+1]^n$ and a set $C=C_1\cap ... \cap C_k$,
where $C_i$ is $(i,k+1)$-insensitive in $W$ for every $i\in [k]$, such that $\dens_W\big(A\cap(W\setminus C)\big)\meg(\delta+6\eta)
\dens_W(W\setminus C)$ and $\dens_W\big(A\cap(W\setminus C)\big)\meg \delta-3\eta$.

We set $P_1=W\setminus C_1$ and $P_i=(W\setminus C_i)\cap C_1\cap...\cap C_{i-1}$ if $i\in \{2,...,k\}$. Also, for every $i\in [k]$ let $\lambda_i=\mathrm{dens}_W (P_i)/\mathrm{dens}_W(W\setminus C)$ and $\delta_i=\mathrm{dens}_W(A\cap P_i)/\mathrm{dens}_W(P_i)$ with the
convention that $\delta_i=0$ if $P_i$ happens to be empty. The family $\{P_1,...,P_k\}$ is a partition of $W\setminus C$ and so
$\sum_{i=1}^k \lambda_i\delta_i =\dens_W\big(A\cap (W\setminus C)\big)/ \dens_W(W\setminus C)\meg \delta+6\eta$. Hence, there exists
$i_0\in [k]$ such that $\lambda_{i_0}\meg 3\eta/k$ and $\delta_{i_0}\meg \delta+3\eta$. We set $D_i=C_i$ if $i<i_0$,
$D_{i_0}=W\setminus C_{i_0}$ and $D_i=W$ if $i>i_0$. Clearly $D_i$ is $(i,k+1)$-insensitive in $W$ for every $i\in [k]$. Moreover,
we have $D_1\cap ...\cap D_k=P_{i_0}$ and so $\dens_W(P_{i_0})=\lambda_{i_0} \dens_W(W\setminus C) \meg (3\eta/k)(\delta-3\eta)\meg \gamma$
and $\dens_W( A\cap P_{i_0})=\delta_{i_0} \dens_W(P_{i_0}) \meg (\delta+3\eta)\dens_W(P_{i_0})\meg(\delta+\gamma)\dens_W(P_{i_0})$ as desired.
\end{proof}
The second part of the proof of Proposition \ref{p6} is a tilling procedure that enables us to partition any ``structured" subset of $[k+1]^n$
(that is, any subset of $[k+1]^n$ of the form $D_1\cap ... \cap D_k$ where $D_i$ is $(i,k+1)$-insensitive for every $i\in [k]$) in subspaces
of sufficiently large dimension. First one treats the case of insensitive sets. To this end, for every $0<\beta\mik 1$ and every integer
$m\meg 1$ we set
\begin{equation} \label{e5}
M_1=\mdhj^*(k,m,\beta) \ \ \text{ and } \ \ F(m,\beta)=\lceil \beta^{-1}(k+1+m)^{M_1}(k+1)^{M_1-m}M_1\rceil
\end{equation}
where $\mdhj^*(k,m,\beta)$ is as defined in Corollary \ref{c5}. We have the following lemma.
\begin{lem}\label{l12}
Let $0<\beta\mik 1$ and $m\in\nn$ with $m\meg 1$. Also let $i\in [k]$. If $n\meg F(m,\beta)$, then for every $(i,k+1)$-insensitive
subset $D$ of $[k+1]^n$ with $\dens(D)\meg2\beta$ there exists a family $\mathcal{V}$ of pairwise disjoint $m$-dimensional subspaces of
$[k+1]^n$ which are all contained in $D$ and are such that $\mathrm{dens}(D\setminus \cup\mathcal{V})<2\beta$.
\end{lem}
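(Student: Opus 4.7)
The plan is to run an iterative algorithm with a single block of $M_1$ coordinates, maintaining the invariant that the current ``remainder'' stays $(i,k+1)$-insensitive, and at each step peeling off an insensitive super-tile that decomposes as a disjoint union of $m$-dimensional subspaces of $[k+1]^n$. Without loss of generality I take $i=k$. Fix one block $B$ of size $M_1=\mdhj^*(k,m,\beta)$, let $C=[n]\setminus B$, and initialize $\mathcal V=\varnothing$, $D_0=D$.

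While $\dens(D_t)\meg 2\beta$ I perform one iteration. By averaging, a $\beta$-fraction of $y\in[k+1]^C$ satisfies $\dens(D_{t,y})\meg\beta$, where $D_{t,y}=\{x\in[k+1]^B:x^\con y\in D_t\}$. Since $D_t$ is $(k,k+1)$-insensitive by the invariant, every slice $D_{t,y}$ is $(k,k+1)$-insensitive in $[k+1]^B$. Corollary~\ref{c5} applied to each such $D_{t,y}$ yields an $m$-variable word $z_y$ in $[k+1]^B$ whose associated subspace $V(z_y)$ satisfies $V(z_y)\upharpoonright k\subseteq D_{t,y}$, and insensitivity of $D_{t,y}$ upgrades this containment to $V(z_y)\subseteq D_{t,y}$. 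A pigeonhole over the at most $(k+1+m)^{M_1}$ candidate variable words isolates a common $z^{(t)}$ shared by a subset $Y_t$ of $y$'s with $|Y_t|\meg\beta(k+1)^{|C|}/(k+1+m)^{M_1}$. Insisting that the output of Corollary~\ref{c5} be a fixed (say lexicographic) function of its input, the equality $D_{t,y}=D_{t,y'}$ whenever $y,y'$ are $(k,k+1)$-equivalent in $[k+1]^C$ forces $Y_t$ to be $(k,k+1)$-insensitive.

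Next I replace $V(z^{(t)})$ by its $(k,k+1)$-closure $\tilde V^{(t)}$ inside $[k+1]^B$. This closure splits as a disjoint union $\bigsqcup_{z'}V(z')$, where $z'$ ranges over variable words obtained by independently flipping each fixed $k$ or $k+1$ of $z^{(t)}$ to the other letter; distinct flips disagree at a fixed coordinate, so the pieces are pairwise disjoint $m$-dimensional subspaces of $[k+1]^B$. Insensitivity of $D_{t,y}$ for $y\in Y_t$ together with $V(z^{(t)})\subseteq D_{t,y}$ makes $\tilde V^{(t)}\subseteq D_{t,y}$. I add every $V(z')\times\{y\}$ (with $z'$ a flip of $z^{(t)}$ and $y\in Y_t$) to $\mathcal V$ and set $D_{t+1}=D_t\setminus(\tilde V^{(t)}\times Y_t)$. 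The newly added pieces are pairwise disjoint (distinct $z'$ at fixed positions, distinct $y$ on $C$) and disjoint from all earlier choices, which live in the already-removed super-tiles. The invariant survives because $\tilde V^{(t)}\times Y_t$ is a product of two insensitive sets in $[k+1]^B$ and $[k+1]^C$, hence $(k,k+1)$-insensitive in $[k+1]^n$, and a difference of insensitive sets is insensitive.

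Each iteration removes density at least $\beta(k+1)^{m-M_1}/(k+1+m)^{M_1}$, so after at most $\lceil\beta^{-1}(k+1+m)^{M_1}(k+1)^{M_1-m}\rceil$ iterations the procedure halts with $\dens(D\setminus\cup\mathcal V)<2\beta$, and the hypothesis $n\meg F(m,\beta)$ trivially accommodates the single block $B$ and the tail coordinates $C$ required for the averaging. The main technical difficulty I expect is the insensitivity bookkeeping: the equivariance of Corollary~\ref{c5}'s output is what lets $Y_t$ inherit insensitivity, and the passage from $V(z^{(t)})$ to its closure $\tilde V^{(t)}$ (recognized as a disjoint union of $m$-dimensional subspaces) is what lets the removal preserve the insensitivity of $D_{t+1}$ so that the iteration can continue.
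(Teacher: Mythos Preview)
Your single-block strategy has a genuine gap: the claimed identity between the $(k,k+1)$-closure of $V(z^{(t)})$ and the disjoint union $\bigsqcup_{z'}V(z')$ over fixed-letter flips of $z^{(t)}$ is false, and this is precisely the step you yourself flag as the main difficulty. The flips alter only the \emph{fixed} $k$/$(k+1)$ coordinates of $z^{(t)}$, but $(k,k+1)$-equivalence also allows independent flips at the \emph{variable} positions whenever the substituted value lies in $\{k,k+1\}$. Concretely, take $k=2$, $|B|=2$, $m=1$ and $z=(v_1,v_1)$. Then $V(z)=\{(1,1),(2,2),(3,3)\}$ and its $(2,3)$-closure is $\{(1,1),(2,2),(2,3),(3,2),(3,3)\}$; since $z$ has no fixed $2$ or $3$ to flip, your union of flips is just $V(z)$ itself. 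The true closure has five elements and hence is not a disjoint union of lines of $[3]^2$ at all. So if $\tilde V^{(t)}$ is the actual closure it is insensitive but cannot be contributed to $\mathcal V$ as $m$-dimensional subspaces, while if $\tilde V^{(t)}=\bigsqcup_{z'}V(z')$ it is a union of subspaces but is not insensitive and $D_{t+1}$ fails the invariant. Either reading kills the iteration after the first step.

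The paper circumvents this obstruction by \emph{not} insisting that the remainder stay globally insensitive. It consumes a \emph{fresh} block of $M_1$ coordinates at every step and only maintains that, after freezing the already-used blocks, each slice of the remainder is $(i,k+1)$-insensitive; this weaker invariant is exactly what is needed to upgrade $V_x\upharpoonright k\subseteq D_x$ to $V_x\subseteq D_x$, and it survives because the set $S_t$ of base points (living in the still-unused coordinates) is itself insensitive. This is why the bound $n\meg F(m,\beta)=\lceil M_1/\Theta\rceil$ is genuinely required: one needs room for roughly $\Theta^{-1}$ disjoint blocks of size $M_1$, not just one. Your remark that the hypothesis ``trivially accommodates the single block'' is a symptom of the error---if a single-block argument worked, $n>M_1$ would already suffice.
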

\begin{proof}
We set $\Theta=\beta (k+1+m)^{-M_1} (k+1)^{m-M_1}$. For every $x\in[k+1]^{n-M_1}$ let $D_x=\{y\in [k+1]^{M_1}: x^\con y\in D\}$.
Since $\ave_{x\in [k+1]^{n-{M_1}}}\dens(D_x)= \dens(D)\meg2\beta$, there exists a subset $T_1$ of $[k+1]^{n-M_1}$ with $\dens(T_1)\meg \beta$
such that $\dens(D_x)\meg\beta$ for every $x\in T_1$. Let $x\in T_1$ be arbitrary. By the choice of $M_1$ in (\ref{e5}) and Corollary \ref{c5},
there exists a subspace $V_x$ of $[k+1]^{M_1}$ of dimension $m$ such that $V_x\upharpoonright k\subseteq D_x$. It follows that
$x^\con (V_x\upharpoonright k)\subseteq D$, and so, $x^{\con}V_x\subseteq D$ since $D$ is $(i,k+1)$-insensitive. The number of
$m$-dimensional subspaces of $[k+1]^{M_1}$ is less than $(k+1+m)^{M_1}$. Therefore there exists a subspace $V_1$ of $[k+1]^{M_1}$
such that the set $S_1=\{x\in[k+1]^{n-M_1}: x^\con V_1\subseteq D\}$ has density at least $\beta(k+1+m)^{-M_1}$. Notice that $S_1$
is $(i,k+1)$-insensitive. We set $\mathcal{V}_1=\{x^\con V_1:x\in S_1\}$. It is clear that $\mathcal{V}_1$ is a family of pairwise disjoint
$m$-dimensional subspaces of $[k+1]^n$ such that $\cup \mathcal{V}_1\subseteq D$. Moreover, by the choice of $\Theta$, we have $\dens(\cup\mathcal{V}_1)\meg\Theta$.

If $\mathrm{dens}(D\setminus\cup\mathcal{V}_1)<2\beta$, then we are done. Otherwise let $D_1=D\setminus\cup\mathcal{V}_1$.
The set $D_1$ is not $(i,k+1)$-insensitive but is ``almost" insensitive in the following sense. For every $y\in [k+1]^{M_1}$
if we set $D_1^y=\{x\in [k+1]^{n-M_1}: x^{\con}y\in D_1\}$, then $D_1^y$ is $(i,k+1)$-insensitive. This is clear if $y\notin V_1$.
On the other hand if $y\in V_1$, then $D_1^y= \{x\in [k+1]^{n-M_1}: x^{\con}y\in D\}\setminus S_1$ and the claim follows since both
$D$ and $S_1$ are $(i,k+1)$-insensitive. Now for every pair $(x,y)\in [k+1]^{n-2M_1}\times [k+1]^{M_1}$ let
$D_1^{(x,y)}=\{z\in [k+1]^{M_1}: x^\con z^\con y\in D_1\}$. Using the previous remarks it is easily seen that the set $D_1^{(x,y)}$ is
$(i,k+1)$-insensitive. Moreover, $\ave_{(x,y)} \dens(D_1^{(x,y)})=\dens(D_1)\meg 2\beta$. Arguing precisely as before, it is possible
to select an $m$-dimensional subspace $V_2$ of $[k+1]^{M_1}$ such that the set $S_2=\{(x,y)\in [k+1]^{n-2M_1}\times [k+1]^{M_1}:
x^\con V_2^{\con} y\subseteq D_1\}$ has density at least $\beta(k+1+m)^{-M_1}$. Also observe that for every $y\in [k+1]^{M_1}$ the set
$S_2^y=\{x\in [k+1]^{n-2M_1}: x^{\con}V_2^{\con}y\in D_1\}$ is $(i,k+1)$-insensitive. We set $\mathcal{V}_2=
\mathcal{V}_1\cup\{ x^{\con}V_2^{\con}y: (x,y)\in S_2\}$. Then $\mathcal{V}_2$ is a new family of pairwise disjoint $m$-dimensional
subspaces of $[k+1]^n$ with $\cup\mathcal{V}_2\subseteq D$ and $\dens(\cup\mathcal{V}_2)\meg \dens(\cup\mathcal{V}_1) +\Theta$.

We continue similarly. At each step the density of the union of the members of the new collection of subspaces is increased by $\Theta$.
So this process must stop after at most $\lfloor \Theta^{-1} \rfloor$ iterations. Since $n\meg \beta^{-1}(k+1+m)^{M_1}(k+1)^{M_1-m}M_1= M_1/\Theta$
the above algorithm will eventually terminate and the proof is completed.
\end{proof}
By recursion on $r\in [k]$, for every $0<\beta\mik 1$ and every $m\in\nn$ with $m\meg 1$ we define the integer $F^{(r)}(m,\beta)$ by the rule
\begin{equation} \label{e6}
F^{(1)}(m,\beta)=F(m,\beta) \ \ \text{ and } \ \ F^{(r+1)}(m,\beta)=F^{(r)}\big(F(m,\beta),\beta\big).
\end{equation}
The following corollary completes the second part of the proof of Proposition \ref{p6}.
\begin{cor} \label{c13}
Let $0<\beta\mik 1$, $m\in\nn$ with $m\meg 1$ and $r\in [k]$. Let $n\meg F^{(r)}(m,\beta)$ and for every $i\in [r]$ let $D_i$ be an
$(i,k+1)$-insensitive subset of $[k+1]^n$. We set $D=D_1\cap ...\cap D_r$. If $\dens(D)\meg 2r\beta$, then there exists a family $\mathcal{V}$
of pairwise disjoint $m$-dimensional subspaces of $[k+1]^n$ which are all contained in $D$ and are such that
$\mathrm{dens}(D\setminus\cup\mathcal{V})<2r\beta$.
\end{cor}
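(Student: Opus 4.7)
The plan is to proceed by induction on $r$. The base case $r=1$ is exactly Lemma \ref{l12}, since $F^{(1)}(m,\beta) = F(m,\beta)$ and the density hypothesis $\dens(D)\meg 2\beta$ matches. For the inductive step, assume the result for $r$ and consider $D = D_1 \cap \cdots \cap D_{r+1}$ with $\dens(D) \meg 2(r+1)\beta$, where each $D_i$ is $(i,k+1)$-insensitive. The strategy is two-stage: first use the inductive hypothesis to cover $D_1\cap\cdots\cap D_r$ by pairwise disjoint subspaces of the \emph{intermediate} dimension $F(m,\beta)$, and then apply Lemma \ref{l12} inside each of these subspaces---with $D_{r+1}$ as the single insensitive set---to refine the cover down to $m$-dimensional subspaces. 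Reading off the recursion $F^{(r+1)}(m,\beta) = F^{(r)}(F(m,\beta),\beta)$ shows that these two stages match the available ambient dimension.

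For the first stage, since $D \subseteq D_1\cap\cdots\cap D_r$ the latter has density at least $2r\beta$, so the inductive hypothesis applied with target dimension $F(m,\beta)$ in place of $m$ (permissible since $n \meg F^{(r)}(F(m,\beta),\beta) = F^{(r+1)}(m,\beta)$) yields a family $\mathcal{W}$ of pairwise disjoint $F(m,\beta)$-dimensional subspaces contained in $D_1\cap\cdots\cap D_r$ with $\dens\big((D_1\cap\cdots\cap D_r)\setminus \cup\mathcal{W}\big) < 2r\beta$. For the second stage, within each $W\in\mathcal{W}$ we have $D\cap W = D_{r+1}\cap W$ (since $W\subseteq D_1\cap\cdots\cap D_r$), and $D_{r+1}\cap W$ is $(r+1,k+1)$-insensitive in $W$ under the natural identification $W\cong[k+1]^{F(m,\beta)}$. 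On those $W$ for which $\dens_W(D_{r+1}\cap W) \meg 2\beta$, Lemma \ref{l12} produces pairwise disjoint $m$-dimensional subspaces of $W$, contained in $D_{r+1}\cap W\subseteq D$, whose complement in $D_{r+1}\cap W$ has density less than $2\beta$ in $W$; on the other $W$'s we add nothing, and the uncovered portion of $D\cap W$ already has density less than $2\beta$ in $W$. Let $\mathcal{V}$ be the union of all these families, which remains pairwise disjoint since the $W$'s are.

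The density accounting is then routine: $\dens(D\setminus\cup\mathcal{V})$ is bounded by $\dens\big((D_1\cap\cdots\cap D_r)\setminus\cup\mathcal{W}\big) + \sum_{W\in\mathcal{W}} 2\beta\,|W|/(k+1)^n < 2r\beta + 2\beta\,\dens(\cup\mathcal{W}) \mik 2(r+1)\beta$, as required. The main technical point to verify carefully---and the only non-bookkeeping step---is that restricting an $(i,k+1)$-insensitive subset of $[k+1]^n$ to a combinatorial subspace $W$ yields an $(i,k+1)$-insensitive subset of $W$ under its natural identification with $[k+1]^{\dim W}$. This holds because each coordinate of $[k+1]^{\dim W}$ corresponds, via the defining variable word of $W$, to a block of positions in $[k+1]^n$ whose values are slaved together, so $(i,k+1)$-equivalence of two points in $W$ lifts to $(i,k+1)$-equivalence of their images in $[k+1]^n$. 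Beyond this observation, the proof is a clean two-stage covering combined with the dimension recursion built into the definition of $F^{(r)}$.
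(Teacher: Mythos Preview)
Your proof is correct and follows essentially the same two-stage induction as the paper: apply the inductive hypothesis to $D_1\cap\cdots\cap D_r$ at the intermediate dimension $F(m,\beta)$, then apply Lemma~\ref{l12} to $D_{r+1}$ inside each resulting subspace. Your write-up is in fact more explicit than the paper's, which omits both the density accounting and the verification that insensitivity is inherited by subspaces.
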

\begin{proof}
By induction on $r$. The case ``$r=1$" follows from Lemma \ref{l12}. Assume that the result has been proved up to $r\in [k-1]$.
Fix $n\meg F^{(r+1)}(m,\beta)$ and let $D_1,...,D_{r+1}$ be a family of subsets of $[k+1]^n$ as described above. By our inductive hypothesis,
there exists a family $\mathcal{V}_1$ of pairwise disjoint $F(m,\beta)$-dimensional subspaces of $[k+1]^n$ which are all contained in
$D':=D_1\cap...\cap D_r$ and are such that $\dens(D'\setminus \cup\mathcal{V}_1)<2r\beta$. Let
$\mathcal{V}_2= \{ V\in\mathcal{V}_1: \dens_V(D_{r+1})\meg 2\beta\}$. For every $V\in\mathcal{V}_2$ let $\mathcal{B}_V$ be the collection
of $m$-dimensional subspaces of $V$ resulting by Lemma \ref{l12} when applied to the set $V\cap D_{r+1}$. We set
$\mathcal{V}=\{W: V\in\mathcal{V}_2 \text{ and } W\in\mathcal{B}_V\}$. Then $\mathcal{V}$ is as desired.
\end{proof}
We are now ready to give the proof of Proposition \ref{p6}.
\begin{proof}[Proof of Proposition \ref{p6}]
For every $0<\delta\mik 1$ and every $d\in\nn$ with $d\meg 1$ let $\beta=\gamma^2/4k$ and $m(d)=\max\{ M_0, F^{(k)}(d,\beta)\}$.
We define
\begin{equation} \label{e7}
N(k,d,\delta)=n\big(\mathrm{GR}\big(k,m(d)\big),\eta^2/2\big).
\end{equation}
Fix $n\meg N(k,d,\delta)$ and a subset $A$ of $[k+1]^n$ with $\dens(A)\meg\delta$. Assume that $A$ contains no combinatorial line of
$[k+1]^n$. By Corollary \ref{c11}, there exist a subspace $W$ of $[k+1]^n$ of dimension $m(d)$ and a family $\{D_1,...,D_k\}$ of subsets
of $W$ such that $D_i$ is $(i,k+1)$-insensitive in $W$ for every $i\in [k]$ and, setting $D=D_1\cap...\cap D_k$, we have
$\mathrm{dens}_W(D)\meg \gamma$ and $\mathrm{dens}_W(A\cap D)\meg(\delta+\gamma)\mathrm{dens}_W(D)$. By Corollary \ref{c13},
there exists a family $\mathcal{V}$ of pairwise disjoint $d$-dimensional subspaces such that $\cup \mathcal{V}\subseteq D$ and
$\dens_W(D\setminus\cup\mathcal{V})<2k\beta=\gamma^2/2$. Combining the previous estimates, we see that
$\dens_W(A\cap \cup \mathcal{V})\meg  (\delta+\gamma/2)\dens_W(\cup\mathcal{V})$. Hence, there exists $V\in\mathcal{V}$ such that
$\mathrm{dens}_W(A\cap V)\meg (\delta+\gamma/2)\mathrm{dens}_W(V)$ or equivalently $\mathrm{dens}_{V}(A)\meg \delta+\gamma/2$.
\end{proof}


\end{document}